\newcommand{\vect}[1]{\accentset{\rightharpoonup}{#1}}
\newtheorem{theorem}{Theorem}
\newtheorem{lemma}{Lemma}
\newtheorem{definition}{Definition}
\newtheorem{example}{Example}
\title{Verification of the Jacobian Conjecture for $d$-linear maps in two variables}
\author{Mario DeFranco}
\begin{document} 

\maketitle

\abstract{For any integer $d \geq 1$, we verify the Jacobian Conjecture for a $d$-linear map in two variables. We prove that almost all the coefficients of the formal inverse are in the ideal specified by the Jacobian condition. We find expressions for certain elements in terms of the generators of this ideal. To obtain these expressions, we generalize a bijective proof of the Cayley-Hamilton theorem in two ways.} 

\section{Introduction}\label{intro}
In this paper we verify the Jacobian conjecture for $d$-linear maps in two variables for any integer $d \geq 1$. 
This paper generalizes \cite{DeFranco} and uses much of the same notation and terminology.
 
The Jacobian Conjecture is a problem in algebraic geometry first stated by Ott-Heinrich Keller in 1939 (see \cite{Keller} and van den Essen \cite{van den Essen}). It has several formulations, and in this paper we consider the following.

For an integer $n \geq 1$, let $x = (x_1, \dots, x_n)\in \mathbb{C}^n$, define a polynomial map to be a map of the form  
\begin{align*}
&f \colon \mathbb{C}^n \rightarrow \mathbb{C}^n\\ 
&(x_1, \ldots, x_n) \mapsto (f_1(x_1, \ldots, x_n), \ldots, f_n(x_1, \ldots, x_n)  )
\end{align*}
where each component function $f_i(x_1, \ldots, x_n)$ is a polynomial.

\textbf{Jacobian Conjecture}: Let $n \geq 2$. Suppose $f(x)$ is a polynomial map such that 
\begin{equation}\label{Jac condition}
\mathrm{Jac}(f)(x) := \det(D(f)(x)) = 1 
\end{equation}
for all $x \in \mathbb{C}^n$, where $\mathrm{Jac}(f)$ is the Jacobian of $f(x)$ and $D(f)(x)$ is the differential of $f(x)$. Then $f(x)$ has an inverse function that is also a polynomial map.  

\begin{definition}
For integers $d,n\geq1$, call a map $f$ 
\[
f \colon \mathbb{C}^n \rightarrow \mathbb{C}^n
\]
a $d$-linear map if the $i$-th component function is of the form 
\[
f_i(x_1, \ldots, x_n) = x_i- (t\sum_{j=1}^n a_{i,j}x_j)^d
\]
for $t,a_{i,j}\in \mathbb{C}$, $t \neq 0$. 
\end{definition}

Instead of elements of $\mathbb{C}$, we treat $a_{i,j}$ and $t$ as indeterminates. Define $R$ to be the polynomial ring generated by the indeterminates $a_{i,j}, 1 \leq i,j\leq n$ with coefficients in $\mathbb{Q}$. The Jacobian $\mathrm{Jac}(f)(x)$ is then an element of $R[t,x]$, and the formal inverse of $f(x)$ is then a formal power series $g(x) \in R[[t,x]]$ (Lemma 1, \cite{DeFranco}). Define $J_{d,n}$ to be the ideal of $R$ generated by the coefficients of the monomials in $x,t$ of $\mathrm{Jac}(f)(x)$, omitting the constant term 1. The indeterminate $t$ is needed in the case $d=1$ to ensure that the coefficients of $g(x)$ are elements of $R$; for $d\geq2$ it is not needed.

Now we describe the layout of this paper. In Section \ref{verification}, Theorem \ref{t d lin n=2},  we prove that for a $d$-linear map $f(x)$ with $n=2$, if $\mathrm{Jac}(f)(x)=1 \in R[x]$, then all but finitely many of the coefficients of its formal inverse $g(x)$ are in $J_{d,2}$. In Section \ref{gen CH}, Theorem \ref{t generalization CH},  we prove two generalizations of the Cayley-Hamilton theorem used to prove Theorem \ref{t d lin n=2} which recover the Cayley-Hamilton theorem when $d=1$. Theorem \ref{gen CH} generalizes Theorem 2 of \cite{DeFranco} which is a combinatorial proof similar to that of Straubing \cite{Straubing}. 

References to the Cayley-Hamilton in connection to the Jacobian Conjecture appear in Abdesselam \cite{Abdesselam} and Umirbaev \cite{Umirbaev}. Abdesselam remarks in \cite{Abdesselam} that the Cayley-Hamilton theorem implies the Jacobian conjecture when $``d=1"$. 

\section{Generalizations of Cayley-Hamilton Theorem} \label{gen CH}

\begin{definition}
Let $C(m,n)$ denote the set of compositions $\alpha$ 
\[
\alpha = (\alpha(1), \ldots, \alpha(n))
\] 
of $m$ into $n$ non-negative integer parts. Denote 
\[
x^\alpha = \prod_{i=1}^n x_i^{\alpha(i)}.
\]
Suppose $m_1+m_2=m$. Then for $\alpha_1 \in C(m_1,n)$ and $\alpha_2 \in C(m_2,n)$, we say 
\[
\alpha_1+\alpha_2 = \alpha \text{ and } \alpha - \alpha_2 = \alpha_1
\] 
if for $1 \leq i \leq n$
\[
\alpha_1(i) +\alpha_2(i) = \alpha(i).
\]

Fix integers $d,n\geq1.$ For an integer $k\geq 0$, define a $k$-level labeling $\nu$ to be a $k$-tuple
\[
\nu = (\nu(1), \ldots, \nu(k))
\]
where $\nu(i)$ is a $(d-1)$-tuple
\[
\nu(i) = (\nu(i,1), \ldots, \nu(i,d-1))
\]
such that $\nu(i,j) \in [1,n]$. For $\alpha \in C(k(d-1),n)$, let $I(\alpha,k)$ denote the set of $k$-level labelings $\nu$ such that 
\[
\alpha(r) = \#\{(i,j)\colon \nu(i,j) = r \}.
\] 

Suppose $S \subset [1,n]$ of order $k$, $\sigma$ a permutation of $S$, $\alpha \in  C(k(d-1),n)$, and $ \nu \in I(\alpha,k)$. Define $w(\sigma, \nu)$ 
\[
w(\sigma, \nu) =(-1)^{\# \mathrm{cycles}(\sigma)} \prod_{i=1}^{k}  (a_{S(i),\sigma(S(i))} \prod_{j=1}^{d-1} a_{S(i),\nu(i,j)}).
\]

\end{definition}
The differential $D(f)$ of a $d$-linear map $f(x)$ is the $n \times n$ matrix 
\begin{equation} \label{Df}
(D(f))_{i,j} = \delta_{i,j} - t da_{i,j} (t\sum_{r=1}^n a_{i,r}x_r)^{d-1}.
\end{equation}
Define the elements $J_\alpha \in R$ by 
\begin{equation} \label{Jac matrix}
\det(D(f)) = \sum_{k=0}^n d^kt^{dk} \sum_{\alpha \in C(k(d-1),n)} J_\alpha x^\alpha.
\end{equation}

\begin{lemma}\label{l J alpha form}
For $\alpha \in C(k(d-1),n)$,
\[
J_{\alpha} = \sum_{S \subset [1,n], |S|=k} \sum_{\sigma} \sum_{\nu \in I(\alpha,k)} w(\sigma, \nu).
\]
where $\sigma$ is a permutation of $S$. 
\end{lemma}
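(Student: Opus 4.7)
The plan is to expand $\det(D(f))$ directly from the entries in \eqref{Df}, organize the expansion by the set of indices at which the non-identity term is chosen, and then identify the remaining labelings with elements of $I(\alpha,k)$ after collecting by monomials $x^\alpha$.

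First I would write $\det(D(f)) = \sum_\pi \mathrm{sgn}(\pi) \prod_i (D(f))_{i,\pi(i)}$ and, for each permutation $\pi$, expand each factor $\delta_{i,\pi(i)} - td\, a_{i,\pi(i)}(t\sum_r a_{i,r}x_r)^{d-1}$ into its two summands. Letting $S \subset [1,n]$ be the subset of indices where the second summand is chosen, the identity summands at indices $i \notin S$ force $\pi(i)=i$, so $\pi$ restricts to a permutation $\sigma$ of $S$. This reindexes the expansion as
\[
\det(D(f)) = \sum_{S \subset [1,n]} \sum_{\sigma} (-1)^{|S|}\,\mathrm{sgn}(\sigma) \prod_{i \in S} td\,a_{i,\sigma(i)}\Bigl(t\sum_r a_{i,r}x_r\Bigr)^{d-1}.
\]

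Second, I would separate the powers of $t$ and $d$. Each $i \in S$ contributes a $td$, giving $d^k t^k$ with $k=|S|$, and each $(d-1)$-power contributes $t^{d-1}$, for a combined $t^{kd}$, matching the prefactor $d^k t^{dk}$ in \eqref{Jac matrix}. The sign step is the one that requires care: using $\mathrm{sgn}(\sigma) = (-1)^{k-\#\mathrm{cycles}(\sigma)}$ for a permutation of a $k$-element set, the product $(-1)^{|S|}\mathrm{sgn}(\sigma)$ collapses to $(-1)^{\#\mathrm{cycles}(\sigma)}$, which is exactly the sign appearing in $w(\sigma,\nu)$.

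Third, writing $S = \{S(1) < \cdots < S(k)\}$, I would expand each factor $(\sum_r a_{S(i'),r} x_r)^{d-1}$ into $\sum_{\nu(i',1),\ldots,\nu(i',d-1) \in [1,n]} \prod_{j=1}^{d-1} a_{S(i'),\nu(i',j)} x_{\nu(i',j)}$. Multiplying these over $i'=1,\ldots,k$ produces a sum over all $k$-level labelings $\nu$, and the accompanying $x$-monomial is $\prod_{i',j} x_{\nu(i',j)} = x^\alpha$ where $\alpha(r) = \#\{(i',j) : \nu(i',j)=r\}$. Grouping by $\alpha$ converts the labeling sum to $\sum_{\alpha \in C(k(d-1),n)} x^\alpha \sum_{\nu \in I(\alpha,k)}(\cdots)$, and the leftover coefficient (including the $a_{S(i'),\sigma(S(i'))}$ from $\sigma$) is precisely $w(\sigma,\nu)$. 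Matching with \eqref{Jac matrix} coefficient-by-coefficient yields the claimed formula for $J_\alpha$.

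The only non-routine step is the sign reduction $(-1)^{|S|}\mathrm{sgn}(\sigma) = (-1)^{\#\mathrm{cycles}(\sigma)}$; everything else is a bookkeeping expansion of the determinant and the $(d-1)$-powers.
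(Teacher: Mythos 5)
Your proposal is correct and follows essentially the same route as the paper: expand the determinant over permutations, let $S$ be the set of rows where the non-identity summand is chosen (forcing $\pi$ to fix the complement and restrict to $\sigma$ on $S$), reduce the sign to $(-1)^{\#\mathrm{cycles}(\sigma)}$, and expand the $(d-1)$-th powers into level labelings $\nu$ grouped by $\alpha$. The only difference is that you also verify the $d^k t^{dk}$ prefactor explicitly, which the paper leaves implicit.
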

\begin{proof}
Expand the determinant \eqref{Jac matrix} as sum over the permutation group on $n$ elements. Each term is indexed by a choice of $n-k$ rows from which 1 is chosen from the diagonal entry. The remaining $k$ rows determine the set $S$. The permutation $\sigma$ is the restriction of the permutation from the determinant expansion to $S$. The sign of this term is 
\begin{align*}
(-1)^k \mathrm{sgn}(\sigma) &= (-1)^k \prod_{c  \in \mathrm{cycles}(\sigma)} (-1)^{1+ \mathrm{length}(c)}\\
&=(-1)^{\# \mathrm{cycles}(\sigma)}.
\end{align*}
The factor 
\[
\prod_{i=1}^{k} a_{S(i),\sigma(S(i))}
\]
in the definition of $w(\sigma,\nu)$ comes from the of $a_{i,j}$ outside the sum from the entry \eqref{Df}. From row $S(i)$ we expand
\[
(\sum_{r=1}^n a_{S(i),r}x_r)^{d-1} = \prod_{h=1}^{d-1}(\sum_{r=1}^n a_{S(i),r}x_r).
\]
into monomials in $x$, choosing $x_{\nu(i,h)}$ from the $h$-th factor. This completes the proof. 
\end{proof}

\begin{definition}

For a $k$-tuple $\lambda= (\lambda(0),\ldots, \lambda(k))$, define the substrings $C_1(\lambda)$ and $C_2(\lambda)$
\begin{align*}
C_1(\lambda) &= (\lambda(l_1), \ldots, \lambda(l_2-1))\\
C_2(\lambda) &= (\lambda(l_1+1), \ldots, \lambda(l_2))
\end{align*}
such that $\lambda(l_1) = \lambda(l_2), \lambda(i) \neq \lambda(j)$ for $l_1<i< j$, if such a string exists. We say that $(l_1,l_2)$ are the last-rep indices of $\lambda$. 
\end{definition}
In \cite{DeFranco} we called the above the ``first-rep indices".

Below we use the definition of the element $z(\mathrm{fern}_{d,n-k},( u_{0}, u_{n}; \nu))$ from \cite{DeFranco}. Here $(u_{0}, u_{n}; \nu)$ is a root-leaf labeling of $\mathrm{fern}_{d,n-k}$ where the root $v_0$ is labeled $u_0$, the leaf vertex $v_{n-k}$ on the leftmost path is labeled $u_n$, and the $j$-th child vertex of $v_i$ to the right of $v_{i+1}$ is labeled by $\nu(i+1,j)$.
 
\begin{theorem}\label{t generalization CH}
For integers $d,n \geq 1$, let $u_{0}, u_{n} \in [1,n]$. 

For $n\geq 1$ and $\alpha \in C(n(d-1),n)$
\begin{equation}\label{generalization CH 1}
\sum_{k=0}^n \sum_{\alpha_1 \in C(k(d-1),n)} \sum_{\nu \in I(\alpha-\alpha_1,k)}z(\mathrm{fern}_{d,k}, (u_{0}, u_{n}; \nu)) J_{\alpha_1}=0.
\end{equation}

For $n\geq 2$, fix a $(d-1)$-tuple $\beta$ of integers in $[1,n]$.
For $\alpha \in C(n(d-1),n)$ and $u_{0} \neq u_{n}$,
\begin{equation}\label{generalization CH 2}
\sum_{k=0}^{n-1} \sum_{\alpha_1 \in C(k(d-1),n)} \sum_{\nu \in I(\alpha-\alpha_1,n-k), \nu(1)=\beta}z(\mathrm{fern}_{d,n-k},( u_{0}, u_{n}; \nu)) J_{\alpha_1}=0.
\end{equation}

\end{theorem}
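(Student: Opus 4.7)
The plan is to prove both identities by a sign-reversing involution on the combinatorial objects indexing the left-hand side, generalizing the proof of Theorem 2 of \cite{DeFranco} (which follows Straubing \cite{Straubing}). First apply Lemma \ref{l J alpha form} to expand each $J_{\alpha_1}$, so that each identity's left-hand side becomes a signed sum indexed by pairs consisting of a fern labeling $(u_0,u_n;\nu)$ of a fern of main-path length $n-k$, together with a triple $(S,\sigma,\nu')$ with $|S|=k$, $\sigma$ a permutation of $S$, and $\nu'\in I(\alpha_1,k)$. The sign is $(-1)^{\#\mathrm{cycles}(\sigma)}$ and the weight is the product of the $a_{i,j}$ appearing in $z(\mathrm{fern}_{d,n-k},(u_0,u_n;\nu))$ and in $w(\sigma,\nu')$.

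The involution is defined by reading the sequence of main-path labels $(u_0,v_1,\ldots,v_{n-k-1},u_n)$ of the fern, together with the elements of $S$, and using the last-rep indices $(l_1,l_2)$ to locate a distinguished repetition. When the repetition is internal to the main path, the segment $v_{l_1},\ldots,v_{l_2-1}$ is detached from the fern and attached as a new cycle of $\sigma$, with the subtended $(d-1)$-tuples $\nu(l_1+1),\ldots,\nu(l_2)$ transferred from $\nu$ into $\nu'$; when the repetition comes from a main-path label coinciding with an element of $S$, the corresponding cycle of $\sigma$ is spliced back into the main path, transferring the associated children back from $\nu'$ into $\nu$. These two moves are mutually inverse and each changes $\#\mathrm{cycles}(\sigma)$ by exactly one, so the sign is reversed while the monomial in the $a_{i,j}$ and the total composition $\alpha$ are preserved. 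Using last-rep rather than first-rep indices localizes the action to the right end of the fern.

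The most delicate step will be the fixed-point analysis, which differs between the two identities. For \eqref{generalization CH 1} the involution is defined for all $l_1\geq 0$; the $n-k+1$ main-path vertices together with the $k$ elements of $S$ produce $n+1$ labels drawn from $[1,n]$, and pigeonhole forces a repetition, so the involution has no fixed points and the identity follows. For \eqref{generalization CH 2} preserving the constraint $\nu(1)=\beta$ requires restricting to $l_1\geq 1$, and the hypothesis $u_0\neq u_n$ is needed to rule out the degenerate case $n-k=0$; the residual fixed points are configurations in which $v_1,\ldots,v_{n-k}$ and $S$ are pairwise distinct, and I expect to cancel them either by a secondary involution exploiting $u_0\neq u_n$ or by reducing to an instance of \eqref{generalization CH 1} applied to the shortened path. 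The hardest verification is that the detach/splice moves cleanly transfer the branching children $\nu$ to $\nu'$ in both directions without ambiguity. Finally, I would verify that specializing to $d=1$ collapses the fern to a path and recovers Straubing's bijective proof of the Cayley--Hamilton theorem and Theorem 2 of \cite{DeFranco}.
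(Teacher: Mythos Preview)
Your plan for \eqref{generalization CH 1} is exactly the paper's: expand via Lemma~\ref{l J alpha form}, index by $(\lambda,\nu,S,\sigma,\rho)$, use last-rep indices, and transfer the cycle $C_1(\lambda)=(\lambda(l_1),\dots,\lambda(l_2-1))$ together with $\nu(l_1+1),\dots,\nu(l_2)$ into $(\sigma,\rho)$; the inverse splices the cycle of $\sigma$ through $\lambda(h)$ (with $h$ maximal such that $\lambda(h)\in S$) back into $\lambda$. Pigeonhole on $n+1$ labels in $[1,n]$ forces one of $h$ or $(l_1,l_2)$ to exist, so there are no fixed points. This part is fine.

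The gap is in your treatment of \eqref{generalization CH 2}. You correctly see that your transfer (call it $\tau_1$) moves $\nu(1)$ whenever $l_1=0$, and you propose restricting to $l_1\ge 1$ and then cleaning up the residual fixed points by an unspecified secondary involution or a reduction to \eqref{generalization CH 1}. Neither of those is how the paper closes the argument, and it is not clear either would work without essentially rediscovering the missing idea. The paper instead defines a \emph{second} transfer $\tau_2$: detach the same cycle but carry the \emph{shifted} block of level labelings $\nu(l_1+2),\dots,\nu(l_2+1)$ (and when $l_2=n-k$ set $\tau_2:=\tau_1$). With $\tau_2$, the entry $\nu(1)$ is never moved: in the generic case $l_2<n-k$ the kept block is $(\nu(1),\dots,\nu(l_1+1),\dots)$, and in the boundary case $l_2=n-k$ one has $\lambda(l_2)=u_n\neq u_0=\lambda(0)$, so $l_1\ge 1$ and $\tau_1$ already preserves $\nu(1)$. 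Thus $\tau_2$ restricts to a sign-reversing involution on the subset $K(u_0,u_n;\beta)=\{\nu(1)=\beta,\ |S|\neq n\}$ with no residual fixed points, and \eqref{generalization CH 2} follows directly. The point you are missing is that there are two natural ways to pair level labelings with a cycle cut from the main path, and choosing the shifted one is precisely what makes the $\nu(1)=\beta$ constraint invariant.
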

\begin{proof} 
We expand each term in the sums \eqref{generalization CH 1} or \eqref{generalization CH 2} into a sum monomials in $R$ and construct a sign-reversing involution $I$ on those monomials. 

Consider the sum  \eqref{generalization CH 1}. In the expansion of $\displaystyle z(\mathrm{fern}_{d,n-k},( u_0,u_n; \nu)) J_{\alpha_1}$, we index each monomial by $(\lambda,\nu, S, \sigma, \rho)$, where $\lambda$ is a $(n-k+1)$-tuple $\lambda$ of integers in $[1,n]$ 
\[
\lambda = (\lambda(0), \ldots, \lambda(n-k))
\]
such that $\lambda(h)$ is the label of the $h$-th vertex on the leftmost path of $\mathrm{fern}_{d,n-k}$; $\nu$ is an $(n-k)$-level labeling of $\mathrm{fern}_{d,n-k}$; $S$ is a subset of $[1,n]$ of order $k$; $\sigma$ is a permutation of $S$; and $\rho$ is a $k$-level labeling. If $k=0$ then $S$ and $\sigma$ are empty. Denote the set of all such $(\lambda,\nu, S, \sigma, \rho)$ by $K(u_0,u_n)$. 

Each of the substrings $C_1(\lambda)$ and $C_2(\lambda)$, being a sequence of distinct integers, corresponds to a cycle of a permutation. 
One direction of the involution is constructed by transferring $C_1(\lambda)$ or $C_2(\lambda)$ from $\lambda$ to the permutation $\sigma$. When $d\geq 2$, we describe two ways to do this, which we call $\tau_1$ and $\tau_2$. 

Let $(l_1,l_2)$ be the last-rep indices for $\lambda$.

Suppose $l_2 \leq n-k$. Let $\lambda'$ denote 
\[
\lambda' = (\lambda(0), \ldots, \lambda(l_1-1), \lambda(l_2),\ldots \lambda(n-k))
\]
and $\nu'$ denote
\[
\nu'=(\nu(1), \ldots, \nu(l_1), \nu(l_2+1), \ldots, \nu(n-k)).
\]
Adjoin this cycle $C_1(\lambda)$ to $\sigma$ to obtain a permutation $\sigma'$ on the resulting set $S'$. 

Now for each $i$, $S'(i)$ is either in $S$ or $C_1(\lambda)$. We define the index $e_1(i)$ for $1 \leq i \leq |S'|=k+l_2-l_1$ by

 \begin{align*} 
&S'(i) = S(e(i)) \text{ if } S'(i) \in S \\  
&S'(i)= \lambda(e_1(i))  \text{ if } S'(i) \in C_1(\lambda) \text{ such that } l_1 \leq e(i) <l_2.
\end{align*}
Define $\rho'$ by
 
 \[
\rho'(i) = \begin{cases} 
&\rho(e(i)) \text{ if } S'(i) \in S \\ 
 & \nu(e_1(i)+1) \text{ if } S'(i) \in C_1(\lambda).
\end{cases}
\]
Then define
\[
\tau_1((\lambda,\nu, S, \sigma, \rho)) = (\lambda',\nu', S', \sigma', \rho').
\]

Now suppose $l_2 < n-k$. Let $\lambda'$ denote 
\[
\lambda' = (\lambda(0), \ldots, \lambda(l_1), \lambda(l_2+1),\ldots \lambda(n-k))
\]
and $\nu''$ denote
\[
\nu''=(\nu(1), \ldots, \nu(l_1+1), \nu(l_2+2), \ldots, \nu(n-k)).
\]
Adjoin the cycle $C_2(\lambda)$ to $\sigma$ to obtain a permutation $\sigma''$ on the resulting set $S''$. 
Now for each $i$, $S''(i)$ is either in $S$ or $C(\lambda)$. Define the index $e_2(i)$ by 
 \begin{align*} 
&S'(i) = S(e_2(i)) \text{ if } S'(i) \in S \\ 
&S'(i)= \lambda(e_2(i))  \text{ if } S'(i) \in C_2(\lambda)  \text{ such that } l_1 < e(i) \leq l_2.
\end{align*}
Define $\rho''$ by
 
 \[
\rho''(i) = \begin{cases} 
&\rho(e_2(i)) \text{ if } S'(i) \in S \\ 
 & \nu(e_2(i)+1) \text{ if } S'(i) \in C_2(\lambda).
\end{cases}
\]
Then define
\[
\tau_2((\lambda,\nu, S, \sigma, \rho)) = (\lambda'',\nu'', S'', \sigma'', \rho'').
\]

If $l_2=n-k$, then define $\tau_2$ by 
\[
\tau_2((\lambda,\nu, S, \sigma, \rho))=\tau_1((\lambda,\nu, S, \sigma, \rho)).
\]

Let $h$ be the greatest integer such that $\lambda(h)\in S$. Then 
we claim at least one of $h$ or $(l_1,l_2)$ exists. For if $(l_1,l_2)$ does not exist, then the set of $n-k+1$ integers of $\lambda$ are distinct, and if $h$ also does not exist, then this set and $S$ (which consists of $k$ distinct integers by construction) are disjoint. This contradicts the fact that the integers are in $[1,n]$. This proves the claim. 

Now for a given $(\lambda,\nu, S, \sigma, \rho)$, suppose $(l_1, l_2)$ exists and either $h$ does not exist or $h<l_1$. Then we claim that $(\lambda,\nu, S, \sigma, \rho)$ is uniquely determined by either one of the images $\tau_1((\lambda,\nu, S, \sigma, \rho))$ or 
$\tau_2((\lambda,\nu, S, \sigma, \rho))$. Denote such an image by $(\lambda',\nu', S', \sigma', \rho')$. 
Let $b$ be the integer of $S'$ such that $b=\lambda'(h')$ with $h'$ maximal. Then we must have $b=\lambda(l_1)$ and $h'>h$ (if $h$ exists). The cycle that was transferred to make $\sigma'$ is thus the cycle that contains $b$. There is only one way to insert this cycle into $\lambda'$ (yielding $\lambda$) and two ways (depending on the choice of $\tau_1$ or $\tau_2$) to transfer level labelings from $\rho'$ to $\nu'$ (yielding $\nu$). Note that if $h'=n-k$ then these two ways coincide. 

Therefore we define the domain and images of $\tau_1$ and $\tau_2$ as
\begin{align*}
&\mathrm{Domain}(\tau) &= \{(\lambda,\nu, S, \sigma, \rho)\colon (l_1, l_2) \text{ exists and either $h$ does not exist or $h<l_1$} \}\\ 
&\mathrm{Image}(\tau) &= \{(\lambda,\nu, S, \sigma, \rho)\colon h \text{ exists and either $(l_1,l_2)$ does not exist or $h>l_1$} \}
\end{align*}
Note that in the image we cannot have $l_1=h$, for then $\lambda(l_2) = \lambda(h)$, contradicting the maximality of $h$. The domain and image are disjoint, and their union is the set $K$ of all $(\lambda,\nu, S, \sigma, \rho)$. From the preceding paragraph, for $i=1,2$
\[
\tau_i\colon \mathrm{Domain}(\tau) \rightarrow  \mathrm{Image}(\tau)
\] 
is a bijection. Define for $i=1,2$
\begin{align*}
I_i(( \lambda, \nu, S, \sigma, \rho)) &= \tau_i(( \lambda, \nu, S, \sigma, \rho)) \text{ if } ( \lambda, \nu, S, \sigma, \rho) \in \mathrm{Domain}(\tau)\\ 
I_i(( \lambda, \nu, S, \sigma, \rho)) &= \tau_i^{-1}(( \lambda, \nu, S, \sigma, \rho)) \text{ if } ( \lambda, \nu, S, \sigma, \rho) \in \mathrm{Image}(\tau).
\end{align*}

Applying either $I_1$ or $I_2$ to the set $K(u_0,u_n)$ yields equation \eqref{generalization CH 1}. Let $K(u_0,u_n;\beta) \subset K(u_0,u_n)$ denote the set of $( \lambda, \nu, S, \sigma, \rho)$ where $\nu(1) = \beta$ and $|S|\neq n$. Now if $u_0\neq u_n$, then 
\[
\tau_2 \colon \mathrm{Domain}(\tau)\cap K(u_0,u_n;\beta) \rightarrow  \mathrm{Image}(\tau)\cap K(u_0,u_n;\beta)
\]
is also a bijection. For any $(l_1,l_2)$ must have $l_2\geq1$, and thus $\nu(1)$ is never transferred to $\sigma$. Applying $I_2$ to the set $K(u_0,u_n;\beta)$ yields equation \eqref{generalization CH 2}. This completes the proof. 


\end{proof} 

\begin{example}\label{e n=2}
\emph 
{Consider the element $z(\mathrm{fern}_{d,2}, (1,2; \nu))$ for some 1-level labeling $\beta$ and a composition $\alpha$ with 
\begin{align*}
\alpha(1) - \#\{i \colon \beta(1,i) = 1\} &= m \\ 
\alpha(2) -\#\{i \colon \beta(1,i) = 2\}&= d-1-m.
\end{align*}
for some $m$. By Theorem \ref{t generalization CH},  equation \eqref{generalization CH 2}, the $k=0$ term is 
\[
{d-1 \choose m} \left( (a_{1,1}a_{1,2})( \prod_{j=1}^{d-1}a_{1,\beta(1,j)}) a_{1,1}^{m}a_{1,2}^{d-1-m} + (a_{1,2}a_{2,2})( \prod_{j=1}^{d-1}a_{1,\beta(1,j)}) a_{2,1}^{m}a_{2,2}^{d-1-m} \right).
\]
This expression is equal to 
\[
{d-1 \choose m} z(\mathrm{fern}_{d,2}, (1,2; \nu))
\]
where $\nu$ is a 2-level labeling with $\nu(1) = \beta$ and $m = \#\{i \colon \nu(2,i) = 1\}$. }

\emph{In the $k=1$ term, in order for $\nu(1)=\beta$, we must have $\alpha_1(1) = m$. With this $\alpha_1$ the $k=1$ term is then
\[
(a_{1,2})( \prod_{j=1}^{d-1}a_{1,\beta(1,j)}) J_{\alpha_1}.
\]
 }
\end{example}

\section{Proof of Verification}\label{verification}

\begin{lemma}\label{l one 2}
Let $n=2, d \geq 1$.  Let $\nu$ be a $2$-level labeling. Suppose either $u_2=2$ or $\nu(2,j)=2$ for some $1\leq j\leq d-1$. Then  
\[
z(\mathrm{fern}_{d,2}, (1,u_2; \nu)) \in J_{d,2}.
\] 
\end{lemma}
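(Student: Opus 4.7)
The plan is to handle the two cases of the hypothesis separately by directly expanding $z(\mathrm{fern}_{d,2},(1,u_2;\nu))$ over the label of the single internal vertex of the main path, and in each case identifying $z$ (up to an invertible rational and a monomial) with a single generator $J_\alpha$ of $J_{d,2}$ coming from the $k=1$ term in \eqref{Jac matrix}. First I would write
\[
z(\mathrm{fern}_{d,2},(1,u_2;\nu)) \;=\; P\sum_{v_1=1}^{2}a_{1,v_1}\,a_{v_1,u_2}\,a_{v_1,1}^{m}\,a_{v_1,2}^{d-1-m},
\]
where $P=\prod_{j=1}^{d-1}a_{1,\nu(1,j)}$ and $m=\#\{j:\nu(2,j)=1\}$. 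The target on the $J_\alpha$ side comes from Lemma \ref{l J alpha form} specialized to $n=2$, $k=1$: for $\alpha\in C(d-1,2)$,
\[
J_\alpha \;=\; -\binom{d-1}{\alpha(2)}\Bigl(a_{1,1}^{\alpha(1)+1}a_{1,2}^{\alpha(2)}+a_{2,1}^{\alpha(1)}a_{2,2}^{\alpha(2)+1}\Bigr).
\]

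For the case $u_2=2$ I would factor the common $a_{1,2}$ out of $a_{1,v_1}a_{v_1,2}$ for both values of $v_1$, yielding
\[
z \;=\; P\,a_{1,2}\bigl(a_{1,1}^{m+1}a_{1,2}^{d-1-m}+a_{2,1}^{m}a_{2,2}^{d-m}\bigr) \;=\; -\binom{d-1}{m}^{-1}P\,a_{1,2}\,J_{(m,d-1-m)}\in J_{d,2}.
\]
This is really the content of Example \ref{e n=2}, where the same identity appears as the $k=0$ piece of equation \eqref{generalization CH 2}.

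For the case $u_2=1$ with $\nu(2,j)=2$ for some $j$, so that $d-1-m\geq 1$, the same expansion reads
\[
z \;=\; P\bigl(a_{1,1}^{m+2}a_{1,2}^{d-1-m}+a_{1,2}\,a_{2,1}^{m+1}a_{2,2}^{d-1-m}\bigr),
\]
and the hypothesis $d-1-m\geq 1$ is exactly what lets me pull $a_{1,2}$ out of the first summand; the resulting bracket then matches the $J_\alpha$ formula above at $\alpha=(m+1,d-2-m)\in C(d-1,2)$, giving $z=-\binom{d-1}{d-2-m}^{-1}P\,a_{1,2}\,J_{(m+1,d-2-m)}\in J_{d,2}$. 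The only real obstacle is keeping the exponent bookkeeping straight and checking that the binomial coefficients that get inverted lie in the allowed range $0\leq m\leq d-1$ (respectively $0\leq d-2-m\leq d-1$). The degenerate case $d=1$ needs no separate argument: Case B is then vacuous because $\nu(2)$ is empty, and Case A reduces to $z=a_{1,2}(a_{1,1}+a_{2,2})$, which lies in $J_{1,2}$ via the $k=1$ generator $a_{1,1}+a_{2,2}$.
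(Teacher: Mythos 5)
Your proof is correct, and it takes a more elementary route than the paper. For the case $u_2=2$ the paper cites Theorem \ref{t generalization CH}, equation \eqref{generalization CH 2} (via Example \ref{e n=2}), whose $k=0$ and $k=1$ terms combine to give exactly the identity $\binom{d-1}{m}z = -P\,a_{1,2}\,J_{(m,d-1-m)}$ that you derive by hand; your direct expansion over the label of $v_1$ together with the $n=2$, $k=1$ specialization of Lemma \ref{l J alpha form} makes the lemma independent of the involution machinery of Section \ref{gen CH}, which is a genuine simplification for this special case. For the case $u_2=1$ with some $\nu(2,j)=2$, the paper instead uses the relabeling symmetry $z(\mathrm{fern}_{d,2},(1,1;\nu)) = z(\mathrm{fern}_{d,2},(1,2;\nu'))$ (swapping which child of $v_1$ plays the role of the leaf $v_2$), reducing to the first case; your explicit factorization of $a_{1,2}$ out of the $v_1=1$ term is equivalent, and has the merit of showing concretely why the hypothesis $d-1-m\geq 1$ is exactly what is needed. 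Your bookkeeping checks out: $\binom{d-1}{d-2-m}=\binom{d-1}{m+1}$ with $0\leq m+1\leq d-1$ guaranteed by $m\leq d-2$, the exponents match $J_{(m+1,d-2-m)}$ as given by Lemma \ref{l J alpha form}, and the $d=1$ degeneration is handled correctly. The trade-off is that the paper's argument (Theorem \ref{t generalization CH} plus relabeling) is the one that generalizes beyond $n=2$, whereas your computation exploits that for $n=2$ the fern has a single internal vertex to sum over.
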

\begin{proof}
Theorem \ref{t generalization CH}, equation \eqref{generalization CH 2} (see Example \ref{e n=2}) implies that 
\begin{equation}\label{z 12}
z(\mathrm{fern}_{d,2}, (1,2; \mu)) \in J_{d,2}
\end{equation}
for any 2-level labeling $\mu$. Thus suppose $u_2=1$. Let $\nu'$ be the 2-level labeling obtained by replacing $\nu(2,j)$ with 1. Then 
\[
z(\mathrm{fern}_{d,2}, (1,1; \nu)) = z(\mathrm{fern}_{d,2}, (1,2; \nu')) 
\]
and the right side is in $J_d,2$  as an instance of equation \eqref{z 12}. This completes the proof. 
\end{proof}

\begin{lemma}\label{z elements}
Let $n=2$ and $d \geq 1$. For any $u_0,u_2 \in \{ 1,2\}$ and any $2$-level labeling $\nu$, 
\begin{equation}\label{d lin n=2}
z(\mathrm{fern}_{d,2}, (u_0,u_2; \nu))\in J_{d,2}.
\end{equation}
\end{lemma}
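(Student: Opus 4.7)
The plan is to combine Lemma~\ref{l one 2} with the $1 \leftrightarrow 2$ symmetry of the problem to reduce to a single residual case, and then finish that case via equation~\eqref{generalization CH 1} of Theorem~\ref{t generalization CH}, which (in contrast to equation~\eqref{generalization CH 2}) permits $u_0 = u_n$.

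First I would record the symmetry: the algebra automorphism $\phi$ of $R$ sending $a_{i,j} \mapsto a_{3-i, 3-j}$ preserves $J_{d,2}$, because the Jacobian matrix is conjugated by the transposition swapping $1$ and $2$ (so $\det(D(f))$ is invariant under the simultaneous substitution $a_{i,j} \mapsto a_{3-i,3-j}$, $x_i \mapsto x_{3-i}$), which permutes the generators $J_\alpha$ among themselves. A direct check from the formula for $z$ gives
\[
\phi\bigl(z(\mathrm{fern}_{d,2}, (u_0, u_2; \nu))\bigr) = z(\mathrm{fern}_{d,2}, (3-u_0, 3-u_2; \nu')),
\]
where $\nu'(i, j) = 3 - \nu(i, j)$. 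Lemma~\ref{l one 2} together with its $\phi$-conjugate therefore handles every case except $u_0 = u_2 = c$ with every $\nu(2, j) = c$; by $\phi$ it suffices to treat $c = 1$.

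Fix such a $\nu$, set $p = \#\{j : \nu(1, j) = 1\}$, and choose $\alpha = (p + d - 1,\ d - 1 - p) \in C(2(d-1), 2)$. I would apply equation~\eqref{generalization CH 1} with $u_0 = u_n = 1$, $n = 2$, and this $\alpha$. The $k \geq 1$ summands are multiplied by coefficients $J_{\alpha_1}$ that are themselves generators of $J_{d, 2}$ (they are the coefficients of nonconstant monomials in $x, t$ of $\det(D(f))$, so this covers $d = 1$ as well), and so modulo $J_{d, 2}$ the identity reduces to
\[
\sum_{\nu' \in I(\alpha, 2)} z(\mathrm{fern}_{d,2}, (1, 1; \nu')) \equiv 0 \pmod{J_{d, 2}}.
\]
Since $z(\mathrm{fern}_{d,2}, (1, 1; \nu'))$ depends on $\nu'$ only through the pair $(p', q') = (\#\{j : \nu'(1, j) = 1\},\ \#\{j : \nu'(2, j) = 1\})$, I would group the sum by $(p', q')$ subject to $p' + q' = p + d - 1$ and $0 \leq p', q' \leq d - 1$. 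Lemma~\ref{l one 2} puts every class with $q' < d - 1$ into $J_{d, 2}$, leaving only the class $(p, d - 1)$, which contributes $\binom{d-1}{p}$ copies of the target element; dividing by this nonzero rational finishes the case, and $\phi$ transports it to $u_0 = u_2 = 2$.

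The main obstacle is the choice of $\alpha$: the specific value $\alpha(2) = d - 1 - p$ is what forces every surviving labeling $\nu'$ to have $\nu'(2)$ identically $1$, thereby isolating the target $z$-element modulo $J_{d, 2}$. Once this is pinned down, the rest is bookkeeping of binomial coefficients.
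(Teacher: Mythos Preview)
Your proposal is correct and follows essentially the same route as the paper: reduce via Lemma~\ref{l one 2} to the case $u_0=u_2=1$ with $\nu(2,j)\equiv 1$, then apply equation~\eqref{generalization CH 1} with the appropriate $\alpha$ so that, modulo $J_{d,2}$, the $k=0$ sum collapses to a nonzero multiple of the target element. The only cosmetic differences are that you make the $1\leftrightarrow 2$ automorphism $\phi$ explicit (the paper just says ``without loss of generality $u_0=1$''), you treat the residual case uniformly rather than splitting off the sub-case $p=d-1$, and you keep track of the multiplicity $\binom{d-1}{p}$ explicitly where the paper is tacit about it.
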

\begin{proof}
 Without loss of generality we assume that $u_0=1$. 
 
 If $u_2=2$ or any $\nu(2,j)=2$, by Lemma \ref{l one 2} statement \eqref{d lin n=2} holds. 
 
 If $u_2=1$ and $\nu(1,j) = \nu(2,j)=1$ for $1 \leq j \leq d-1$, then the $k=0$ term of equation \eqref{generalization CH 1} is 
 \begin{equation}\label{all 1}
 z(\mathrm{fern}_{d,2}, (1,1; \nu))
 \end{equation}
 Each of the other terms with $k\geq1$ is in $J_{d,2}$. Thus statement \eqref{d lin n=2} holds. 
 
 Suppose that $u_2=1$ and $\nu(2,j)=1$ for all $j$, and there is at least one $j$ such that  $\nu(1,j)=2$. Then the $k=0$ term of $\eqref{generalization CH 1}$ is equal to 
 \[
 z(\mathrm{fern}_{d,2}, (u_0,u_2; \nu)) + \sum_{\nu'} z(\mathrm{fern}_{d,2}, (u_0,u_2; \nu'))
 \] 
 where each $\nu'$ in the sum has $\nu'(2,i)=2$ for some $i$. By Lemma \ref{l one 2} each such term in the sum is in $J_{d,2}$. The terms for $k \geq 1$ are also in $J_{d,2}$. Thus statement \eqref{d lin n=2} holds. This completes the proof. 
\end{proof}

\begin{theorem}\label{t d lin n=2}
For a $d$-linear map $f(x)$ with $n=2$, suppose $\mathrm{Jac}(f(x)) = 1$ for all $x \in \mathbb{C}^2$. Then the formal inverse $g(x)$ of $f(x)$ is a polynomial map. 
\end{theorem}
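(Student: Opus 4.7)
The strategy is to reduce the theorem to Lemma \ref{z elements} via the combinatorial expansion of the coefficients of the formal inverse $g(x)$ developed in \cite{DeFranco}. Under the hypothesis $\mathrm{Jac}(f)=1 \in R[x]$, every generator of $J_{d,2}$ vanishes in $R$, making $J_{d,2}$ the zero ideal; it therefore suffices to show that all but finitely many coefficients of $g(x) \in R[[t,x]]$ lie in $J_{d,2}$, since the remaining coefficients will constitute the polynomial part.

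First I would invoke from \cite{DeFranco} the representation of each coefficient of $g(x)$ as a linear combination of fern-labeling elements $z(\mathrm{fern}_{d,k}, (u_0, u_k; \nu))$. For $n=2$, the labels $u_0, u_k$, and $\nu(i,j)$ all range over $\{1,2\}$, so every appearing element is of the restricted form analyzed in Section \ref{gen CH}. Second, I would show that every such $z$-element with $k \geq 2$ lies in $J_{d,2}$: the base case $k=2$ is exactly Lemma \ref{z elements}, and the general case $k \geq 3$ follows by iteratively applying Theorem \ref{t generalization CH}, equation \eqref{generalization CH 1}, which expresses a largest-height fern element modulo $J_{d,2}$ as a combination of smaller-height fern elements. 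The recursion terminates at the $k=2$ case handled by Lemma \ref{z elements}.

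Combining the two steps, every coefficient of $g(x)$ beyond the polynomial part lies in $J_{d,2}$ and hence vanishes under the Jacobian hypothesis, so $g(x)$ is a polynomial map. The main obstacle is the second step: verifying that the iterative reduction via equation \eqref{generalization CH 1} actually cleanly reduces to the $\mathrm{fern}_{d,2}$ case for $n=2$, without introducing higher-order fern elements outside the scope of Lemma \ref{z elements}. This relies essentially on the binary-valued simplification available in the two-variable setting, where every label in a fern is either $1$ or $2$ and the case analysis of Lemmas \ref{l one 2} and \ref{z elements} is exhaustive.
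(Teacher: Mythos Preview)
Your overall plan---reduce to Lemma \ref{z elements} via the tree/fern machinery of \cite{DeFranco}---matches the paper's, but both steps you describe miss the actual mechanism, and the second step contains a real gap.

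For the first step, Lemma 2 of \cite{DeFranco} does not express a coefficient of $g(x)$ as a linear combination of elements $z(\mathrm{fern}_{d,k},\cdot)$ with varying $k$. What it does is partition the labeled trees of height $\geq n$ contributing to a given coefficient into blocks $K(T,\lambda)$, and show that each block-sum \emph{factors} as $z(\mathrm{fern}_{d,n},\cdot)\cdot g$ for some $g\in R$. Since here $n=2$, every block already carries a $z(\mathrm{fern}_{d,2},\cdot)$ factor; ferns of height $\geq 3$ never enter. The paper's proof is then immediate: only finitely many coefficients involve exclusively height-$\leq 1$ trees, every other coefficient is a sum of blocks each divisible by some $z(\mathrm{fern}_{d,2},(i,u_2;\nu))$, and Lemma \ref{z elements} places that factor in $J_{d,2}$.

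For the second step, the induction you propose via equation \eqref{generalization CH 1} does not go through as stated. In Theorem \ref{t generalization CH} the integer $n$ is the number of variables; with $n=2$ the identity relates only ferns of height $0,1,2$ and provides no mechanism for expressing a height-$k$ fern with $k\geq 3$ in terms of shorter ones. One can indeed show such elements lie in $J_{d,2}$, but by extracting a $z(\mathrm{fern}_{d,2},\cdot)$ factor (i.e.\ the factorization lemma again), not by iterating \eqref{generalization CH 1}. Because the correct version of your first step never produces those higher ferns, this induction is both unjustified and unnecessary.
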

\begin{proof} 
We use the notation of \cite{DeFranco}, Section 2. Consider the coefficient $c_{\alpha,N} \in R$ of the monomial $t^N x_1^{\alpha(1)}x_2^{\alpha(2)}$ in the formal power series for $g_i(x), i \in \{ 1,2\}$. By Lemma 1 of \cite{DeFranco}, $c_{\alpha}$ is a sum over labeled trees $(T,\lambda)$ with $N$ edges, where the root is labeled $i$, and there are $\alpha(1)$ leaves labeled 1 and $\alpha(2)$ leaves labeled 2. Since each vertex of such a tree has degree $d$ or degree $0$, we have for each tree $T$
\[
\alpha(1)+ \alpha(2) = 1+(d-1)\frac{N}{d}.
\]
There are finitely many trees contributing to $c_{\alpha,N}$. Furthermore there are only finitely many pairs ($N$, $\alpha$) such that all trees $T$ contributing to $c_{\alpha,N}$ have height at most 1. Thus we may assume each $T$ has height at least two. Now partition the set of the labeled trees $(T,\lambda)$ contributing to $c_{\alpha,N}$ according to the their underlying trees $T$, and then further partition into subsets $K(T,\lambda)$. By Lemma 2 of \cite{DeFranco}, the sum over $K(T,\lambda)$ has a factor of the form 
\[
z(\mathrm{fern}_{d,2}, (i,u_2; \nu))
\]
for some $u_2 \in \{1,2 \}$ and some 2-level labeling $\nu$. By Lemma \ref{z elements}, this element is in $J_{d,2}$. This completes the proof.
\end{proof}

We describe another relation that may be used in the proof of Lemma \ref{z elements}. Let $\alpha_1, \alpha_2 \in C(d-1, 2)$ such that $\alpha_1(1)\geq 1$. Let $u \in \{ 1,2\}$ and define $\alpha_1'$ and $\alpha_1';$ be the compositions
\begin{align*}
\alpha_1' &= (\alpha_1(1)-\delta_{u,2}, \alpha_1(2) + \delta_{u,2})\\
\alpha_1'' &= (\alpha_1(1)-1, \alpha_1(2) +1).
\end{align*}
Then it is straightforward to check that
\begin{equation} \label{2 1s}
z(\mathrm{fern}_{d,2}, (u,v; \nu)) = \frac{a_{2,u}a_{2,1}^{\alpha_2(1)}a_{2,2}^{\alpha_2(2)} }{{d-1, \choose \alpha_1(1)-1} }J_{\alpha_1''} - \frac{a_{2,2}a_{2,1}^{\alpha_2(1)}a_{2,2}^{\alpha_2(2)} }{{d-1, \choose \alpha_1(2)+ \delta_{u,2}} }J_{\alpha_1'} 
+\frac{a_{1,u}a_{1,1}^{\alpha_1(1)}a_{1,2}^{\alpha_1(2)} }{{d-1, \choose \alpha_2(1)} }J_{\alpha_2} 
\end{equation}
where $\nu$ is a 2-level labeling such that $\nu(1)$ contains exactly $\alpha_1(1)$ 1's and $\nu(2)$ contains exactly $\alpha_2(1)$ 1's. 

\section{Further Work}
\begin{itemize}

\item Since the elements $z(\mathrm{fern}_{d,n}, (u_0,u_n; \nu))$ are homogeneous and $J_{d,n}$ is generated by homogenous elements, ideal membership may be reduced to vector space membership. See if there are formulas for the row-reduced matrix in terms of $d$ and $n$. 

\item See if equation \eqref{2 1s} generalizes to higher $n$ or if it can be expressed using the bijections $I_1,I_2$ or others. 

\item See if the operations $\tau_1$ and $\tau_2$ can be mixed to form intermediate involutions between $I_1$ and $I_2$, or if other operations can be found. 

\item Characterize all relations among the $J_\alpha$ with coefficients of the form $z(\mathrm{fern}_{d,k}, (i,j;\nu)$.

\item The operations $\tau_1$ and $\tau_2$ arise from the fact that an interval has two boundary components. See if the bijections can be generalized by generalizing the interval to higher dimensions. 

\item See if these generalizations of the Cayley-Hamilton Theorem apply to the algebras of Umirbaev \cite{Umirbaev}. 

\end{itemize}

\end{document}